\newtheorem{theorem}{Theorem}
\newtheorem{claim}{Claim}
\title{Lower bound on size of branch-and-bound trees for solving lot-sizing problem}
\author[1]{Santanu S. Dey\thanks{santanu.dey@isye.gatech.edu}}
\author[1]{Prachi Shah\thanks{prachi.shah@gatech.edu}}
\affil[1]{School of Industrial and Systems Engineering, Georgia Institute of Technology}
\date{}
\begin{document}

\maketitle
\begin{abstract} We show that there exists a family of instances of the lot-sizing problem, such that any branch-and-bound tree that solves them requires an exponential number of nodes, even in the case when the branchings are performed on general split disjunctions. 
\end{abstract}
\section{Introduction}\label{sec:into}
\subsection{Branch-and-bound procedure}
Land and Doig~\cite{land1960automatic} invented the branch-and-bound procedure to solve  mixed integer linear programs (MILP). Today, all state-of-the-art  MILP solvers are based on the branch-and-bound procedure.  An important decision is formalizing a branch-and-bound procedure is to decide the method to partition the feasible region of the linear program corresponding to a node. If the partition is based on variable disjunctions, that is, the feasible region of the linear program at a given node is partitioned by adding the inequality of the form $x_i \leq \eta$ to one child node and the inequality  $x_i \geq \eta + 1$  to the other child node where $\eta$ is an integer, then we call the branch-and-bound tree as a \emph{simple} branch-and-bound tree. On the other hand if we allow the use of more general split disjunctions of the form:
$$\left(\pi^{\top}x \leq \eta \right) \vee \left(\pi^{\top}x \geq \eta + 1 \right),$$
where $\pi$ is an integer vector and $\eta$ is an integer, to create two child nodes, we call the resulting branch-and-bound tree as a \emph{general} branch-and-bound tree.  Clearly general branch-and-bound tree are expected to be smaller than simple branch-and-bound tree. However, in practice, MILP solvers use simple branch-and-bound trees (one reason may be to maintain the sparsity of linear programs solved at child nodes; see discussion in~\cite{dey2021branch,dey2015approximating}). 

One way to measure the efficiency of the branch-and-bound algorithm for a given class of instances, is to estimate the size of the branch-and-bound tree to solve the instances, since the size of the branch-and-bound tree corresponds to the number of linear programs solved. 

\paragraph{Upper bounds on size of branch-and-bound trees.} Pataki~\cite{pataki2010basis} showed that for certain classes of random integer programs the general branch-and bound-tree has linear number of nodes with high probability, while recently~\cite{dey2021branch,borst2020integrality} showed that for two different classes of random integer programs even the simple branch-and bound-tree has polynomial size (number of nodes), with good probability. The paper~\cite{dey2021theoretical} presents upper bounds on size of simple branch-and-bound tree for the vertex cover problem.

\paragraph{Lower bounds on size of branch-and-bound trees.} The papers~\cite{jeroslow1974trivial,chvatal1980hard} present examples of integer programs where every {simple} branch-and-bound algorithm for solving them has exponential size, although these instances can be solved using polynomial-size general branch-and-bound trees; see~\cite{yang2021multivariable,basu2020complexity2}. Cook et al.~\cite{cook1990complexity} present a travelling salesman problem (TSP) instance that requires exponential-size branch-and-bound tree to solve when using simple branching. 

Note that a lower bound on the size of a general branch-and-bound tree is also a lower bound on the size of a simple branch-and-bound tree. The paper~\cite{dadush2020complexity}  was the first to prove an exponential lower bound on the size of general  branch-and-bound tree to prove the infeasibility of the cross-polytope. The paper~\cite{basu2020complexity2} shows that the sparsity of the disjunctions used for branching can have a large impact on the size of the branch-and-bound tree. The paper~\cite{dey2021lower} presents exponential lower bounds on the size of general branch-and-bound trees for solving a particular packing integer program, a particular covering integer program, and a particular TSP instance. 

This paper contributes to this literature, by showing an (worst case) exponential lower bound on the size of general branch-and-bound tree for solving lot-sizing problems.   

\subsection{Lot-sizing problem}
In this paper, we consider the classical lot-sizing problem of determining production volumes to meet demands in $n$ periods exactly, while minimizing production cost and fixed cost of production. A lot-sizing problem with a time horizon of $n$ periods can be formulated as a mixed-integer linear program (MILP) as follows,
\begin{subequations}
\begin{eqnarray} 
& \textup{min} & \sum_{i=1}^{n} p_i \, x_i + \sum_{i=1}^{n} f_i \, y_i  \label{lotsize:obj} \\
&& \sum_{k = 1}^i x_{k} \geq d_{1,i}   \ \textup{for all} \ i \in \{1, \dots, n- 1\},  \label{lotsize:flow} \\
&& \sum_{k = 1}^n x_{k} = d_{1,n},  \label{lotsize:flow1} \\
&& x_i \le d_{i,n} \, y_i \ \textup{for all} \ i \in \{1, \hdots, n\}, \label{lotsize: x-y} \\
&& x \in  \mathbb{R}^n_+, \  y \in  [0, 1 ]^n, \label{lotsize:bounds} \\
&& y \in \mathbb{Z}^n,
\end{eqnarray}
\end{subequations}\label{eq:milp_lotsizing}
where variable $x_i$ is the quantity produced in period $i$ and $y_i$ is a binary variable with value $1$ if production occurs in period $i$ and $0$ otherwise. Unit cost of production, fixed cost of production, and demand in period $i$ are denoted by $p_i$, $f_i$ and $d_i$ respectively. We represent the cumulative demand from period $i$ to period $j$ by, 
\begin{equation*}
    d_{i,j} = \left(\sum_{k=i}^j d_k \right).
\end{equation*}
The lot-sizing problem is a very well-studied problem~\cite{pochet2006production} with many important applications. The classical dynamic programming algorithm to solve lot-sizing problem uses the so-called Wagner-Whitin property and runs in $\mathcal{O}(n^2)$~\cite{wagner1958dynamic}. This running-time was later improved to  $\mathcal{O}(n \textup{log}(n))$~\cite{aggarwal1993improved,federgruen1991simple,wagelmans1992economic}. The full polyhedral description of the convex hull of feasible solutions is presented in~\cite{barany1984uncapacitated}.

In this paper, we show that even though lot-sizing is such a ``simple" problem,  that is there is a polynomial-time dynamic programming algorithm to solve it, general branch-and-bound tree in the worst case may take exponential-time to solve the problem.  Formally we prove the following:
\begin{theorem}\label{thm:1}
Consider the lot-sizing instance on $n$ time periods with 
\begin{equation}
\begin{gathered} \label{exp_instance}
    f_j = 1, \quad p_j = n - j + 1, \quad d_j = 1 \quad \textup{for all } j \in  \{1, \dots, n\}.
\end{gathered}
\end{equation}
Then any general branch-and-bound tree that solves this instance has at least $2^{ (n/2) -1}$ leaf nodes.
\end{theorem}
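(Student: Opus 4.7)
The plan is to exhibit a family of roughly $2^{n/2}$ LP-feasible, integer-infeasible points $\{p_\sigma\}$ in the root LP region, each with objective strictly below the integer optimum $z^*$, and arranged so that any integer split disjunction $\pi^\top(x,y)\le \eta \vee \pi^\top(x,y)\ge \eta+1$ has its open strip $\{(x,y):\eta<\pi^\top(x,y)<\eta+1\}$ containing only $O(1)$ of the points; a standard counting argument then forces at least $2^{n/2-1}$ leaves.

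First I will compute the two relevant optima. The integer optimum is $z^* = n(n+1)/2 + n$, attained for instance by $x = y = \mathbf 1$ (indeed, because $p_j - p_{j+1} = 1$ equals $f_{j+1}=1$, many other integer solutions attain this same value). The root LP optimum is $(x^*, y^*) = (\mathbf 1, (1/n, 1/(n-1), \ldots, 1/2, 1))$, with value $n(n+1)/2 + H_n$, leaving an integrality gap of order $n-H_n = \Theta(n)$. To construct the $p_\sigma$'s I will pair up the periods $(1,2),(3,4),\ldots,(n-1,n)$ and, for each $\sigma\in\{0,1\}^{n/2}$, apply a ``production shift'' in each pair $j$ with $\sigma_j=1$: replace $(x_{2j-1},x_{2j})=(1,1)$ with $(x_{2j-1},x_{2j})=(2,0)$, and tighten each $y$-coordinate to its minimum $x_i/(n-i+1)$ allowed by the link constraint. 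The cumulative-demand constraints are preserved within each pair, each shift increases the cost by strictly less than $1$ (precisely $1-1/[(n-2j+1)(n-2j+2)]$), and each pair with $j<n/2$ contributes at least one fractional $y$-coordinate, so every $p_\sigma$ is LP-feasible, integer-infeasible, and has cost well below $z^*$.

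The crux of the proof --- and what I expect to be the main obstacle --- is a ``spread'' lemma asserting that no single split disjunction's open strip contains more than $O(1)$ of the $p_\sigma$'s. Writing $\pi^\top p_\sigma = \pi^\top p_{\mathbf 0} + \sum_j \sigma_j w_j$, where each $w_j$ decomposes as an integer part (from $\pi^x$) plus a pair-specific rational $\pi^y_{2j-1}/(n-2j+2) - \pi^y_{2j}/(n-2j+1)$, the task is to show that the $2^{n/2}$ subset-sums cannot cluster in any length-$1$ interval of the real line. This is a number-theoretic step that should exploit the fact that the denominators $n-2j+1,\,n-2j+2$ for different pairs interact incoherently. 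The bare shift construction above fails the lemma for certain simple disjunctions (for example, $0<y_1<1$ cuts every $p_\sigma$), so it will have to be refined --- perhaps by forcing certain $y$-coordinates to be exactly $1$ across all $\sigma$ and confining the combinatorial variability to pairs whose denominators are pairwise ``well-separated'' --- while preserving both cost $<z^*$ and integer-infeasibility.

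Given the spread lemma, the proof then concludes via the standard B\&B counting argument. Since every leaf's LP feasible region has optimal value at least $z^*$, no leaf can contain a point $p_\sigma$ with $\mathrm{cost}(p_\sigma)<z^*$; hence each $p_\sigma$ must lie in the open strip of some internal node of the tree. The spread lemma says that each internal node's strip catches only $O(1)$ of the $p_\sigma$'s, so there must be at least $2^{n/2}/O(1) \ge 2^{n/2-1}$ internal nodes, and therefore at least $2^{n/2-1}$ leaves, as claimed.
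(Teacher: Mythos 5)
Your overall strategy is genuinely different from the paper's, and unfortunately the step you yourself flag as the crux --- the ``spread lemma'' --- is not merely unproven but provably false for any construction of the kind you describe, so it cannot be repaired by the refinements you suggest. Each $p_\sigma$ must be integer-infeasible, and since $y$ is the only integer-constrained block, each $p_\sigma$ must have some coordinate $y_i$ strictly between $0$ and $1$. By pigeonhole over the $n$ coordinates, some index $i$ has $y_i$ fractional in at least $2^{n/2}/n$ of the points, and the single variable disjunction $y_i\le 0 \,\vee\, y_i\ge 1$ then contains all of those points in its open strip. So no family of $2^{n/2}$ integer-infeasible points can satisfy a bound of $O(1)$ --- or even $\mathrm{poly}(n)$ --- points per strip, and the counting argument ``number of internal nodes $\ge$ number of points divided by maximum points per strip'' can never yield more than about $n$ nodes. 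Your proposed fix (forcing some $y$-coordinates to equal $1$ and confining the variability to well-separated pairs) does not escape this: whatever coordinates remain fractional are subject to the same pigeonhole.

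The paper avoids this trap by counting \emph{integer} points rather than fractional ones. It takes $\mathcal{S}=\{y\in\{0,1\}^n : y_j=1 \text{ for } j \text{ odd}\}$, of size $2^{(n-1)/2}$; since these are integer vectors, no split disjunction can ever place them in an open strip, so each survives to some leaf. The work then goes into showing that no leaf can contain two distinct $u,v\in\mathcal{S}$: the midpoint $\hat y=(u+v)/2$ still satisfies that node's branching constraints by convexity, and one can attach an $\hat x$ making $(\hat x,\hat y)$ feasible for the node's LP with objective value $\OPT-0.5\,N_{0.5}<\OPT$, so the node cannot be pruned and is not a leaf. That pairwise ``midpoint'' argument is the idea your proposal is missing; to salvage your write-up, replace the fractional family and the spread lemma with an integer family and a proof that any two of its members certify that their common node is internal. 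Your computation of the integer optimum and your observation that a production shift costs strictly less than the saved fixed charge are correct, and they are essentially the ingredients the paper uses inside that midpoint argument.
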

We provide a proof of Theorem~\ref{thm:1} in the next section. 

\section{Proof of Theorem~\ref{thm:1}}

We begin by finding the optimal objective function value for the class of instances (\ref{exp_instance}).
\begin{claim} \label{claim:mip opt val}
For the lot-sizing instance (\ref{exp_instance}) with $n$ time periods, the optimal objective function value is $\dfrac{n(n+1)}{2} + n$.
\end{claim}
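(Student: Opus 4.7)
\emph{Strategy.} The plan is to prove matching upper and lower bounds of $\tfrac{n(n+1)}{2} + n$. The upper bound is immediate: the all-ones solution $x_j = y_j = 1$ is feasible---cumulative production equals cumulative demand, and $x_j = 1 \leq (n-j+1)\cdot 1$---with objective $\sum_{j=1}^n (n - j + 1) + \sum_{j=1}^n 1 = \tfrac{n(n+1)}{2} + n$. The real work lies in the matching lower bound.

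\emph{Rewriting the production cost.} First I would introduce partial sums $X_i := \sum_{k=1}^i x_k$ (with $X_0 := 0$). Because consecutive cost coefficients $p_j = n - j + 1$ differ by exactly $1$, a short Abel-summation calculation yields $\sum_{j=1}^n (n - j + 1)\, x_j = \sum_{i=1}^n X_i$. The cumulative-demand constraints (\ref{lotsize:flow})--(\ref{lotsize:flow1}) then read $X_i \geq i$ for $i < n$ and $X_n = n$, and alone they force $\sum_i X_i \geq \tfrac{n(n+1)}{2}$. This is essentially the LP-relaxation bound on the production cost, so I still need to extract the extra $+n$ from the $y$-variables.

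\emph{Block/plateau analysis.} Let $s := \sum_j y_j$. Since $X_1 = x_1 \geq 1$, necessarily $y_1 = 1$, so the set of open periods has the form $\{j_1, \dots, j_s\}$ with $j_1 = 1$; set $j_{s+1} := n+1$. Because $y_j = 0$ forces $x_j = 0$, the sequence $(X_i)$ is constant on each block $B_k := \{j_k, \dots, j_{k+1}-1\}$, equal to some value $C_k$ with $C_k \geq j_{k+1} - 1$ for $k < s$ and $C_s = n$. Writing $g_k := j_{k+1} - j_k$ for the block length, a direct computation gives $\sum_{i \in B_k}(X_i - i) \geq \tfrac{g_k(g_k - 1)}{2} \geq g_k - 1$, where the second inequality uses $g_k \in \mathbb{Z}_{\geq 1}$. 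Summing over blocks, $\sum_i (X_i - i) \geq \sum_{k=1}^s (g_k - 1) = n - s$, so the production cost is $\geq \tfrac{n(n+1)}{2} + (n - s)$; adding the fixed cost $s$ yields the claimed lower bound $\tfrac{n(n+1)}{2} + n$.

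\emph{Main obstacle.} The crux is obtaining the extra $+n$ beyond the LP bound, which requires genuinely using integrality of $y$. The block decomposition together with the elementary inequality $g_k(g_k-1)/2 \geq g_k - 1$ for integer $g_k \geq 1$ is what captures the one-for-one trade-off: every saved open period forces exactly one unit of additional production cost from the flat-plateau structure of $X$, making the trade-off a wash.
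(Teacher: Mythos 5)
Your proof is correct, and it takes a genuinely different route from the paper's. The paper proves the claim by a per-demand domination argument: invoking the standard structural property of lot-sizing (each period's demand may be assumed to be served entirely from a single production period in some optimal solution), it compares the cost $f_j + p_j = n-j+2$ of serving $d_j$ by opening period $j$ against the unit cost $p_i = n-i+1 \geq n-j+2$ of serving it from any earlier period $i<j$, and concludes that opening every period is optimal. You instead give a self-contained lower bound that never appeals to that structural result: the summation-by-parts identity $\sum_j (n-j+1)x_j = \sum_i X_i$ turns the production cost into cumulative quantities, the flow constraints $X_i \geq i$ (for $i<n$) and $X_n = n$ give the LP bound $\tfrac{n(n+1)}{2}$, and the plateau decomposition of $X$ over the blocks between open periods, combined with the integer inequality $g_k(g_k-1)/2 \geq g_k-1$, recovers the extra $n-s$ that exactly offsets the saved fixed cost $s$. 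I checked the key steps ($y_1=1$ forced by $X_1 \geq 1$; $C_k \geq j_{k+1}-1$ for $k<s$ and $C_s = n = j_{s+1}-1$; $\sum_k (g_k-1) = n-s$) and they all go through. What your approach buys is full rigor without the external citation and a clean separation of the LP value from the integrality contribution; what the paper's approach buys is brevity and the intuition that the instance is tuned so that each saved fixed cost is exactly cancelled by increased production cost. Both arguments are valid.
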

\begin{proof}
Observe that $y_1 = 1$ for any feasible solution since $d_1 > 0$. Now, consider the demand at period $j \geq 2$ and $y_j$. There are two possible cases: 
\begin{enumerate}[label=(\alph*)]
    \item $y_j = 1$. Then we may assume that the demand at $j$ is met by production in period $j$~\cite{pochet2006production}. The total cost incurred for satisfying demand at $j$ is, 
    \begin{equation*}
        f_j + p_j = n - j + 2.
    \end{equation*}
    \item $y_j = 0$. Then we may assume that the entire demand at $j$ is met by production in some period $i < j$~\cite{pochet2006production}. In this case, the unit cost of production of at period $i$ is 
    \begin{equation*}
        p_i = n - i + 1 \ge n - j + 2.
    \end{equation*}
\end{enumerate}
In other words, the increase in unit cost incurred by producing in an earlier time period, is at least as much as the fixed cost 
for period $j$. Therefore, we conclude that setting  $y_j = 1 \; \textup{for all } j \in \{1, \dots, n\}$ leads to an optimal solution. 
This optimal solution is:
\begin{align*}
    \hat{x}_j &= d_j = 1,  \ \, \textup{for all } j \in \{1, \dots, n\},\\
    \hat{y}_j &= 1, \ \, \textup{for all } j \in \{1, \dots, n\}.
\end{align*}
Therefore, the optimal cost (OPT) of the MILP is  
\begin{equation} \label{eq:opt_milp}
    \begin{aligned}
        OPT =& \sum_{i=1}^{n} p_i \, \hat{x}_i + \sum_{i=1}^{n} f_i \, \hat{y}_i  \\
    = & \ \sum_{i=1}^{n} (n - j + 1) + \sum_{i=1}^{n} 1 =  \ \dfrac{n(n+1)}{2} + n.
    \end{aligned}
\end{equation}
\end{proof}

Given any node $\mathcal{N}$ in a branch-and-bound tree, we have two types of constraints defining the feasible region of the linear program corresponding to $\mathcal{N}$: (i) the constraints describing the original lot-sizing formulation (\ref{lotsize:flow})-(\ref{lotsize:bounds}), and (ii) the additional inequalities that were added to this node and its ancestors (starting at the child of the root node). We call the second set of constraints as branching constraints at node $\mathcal{N}$. 

We will now show the size of any general branch-and-bound tree for lot-sizing instances (\ref{exp_instance}) is exponential in the number of time periods, that is present a proof of Theorem~\ref{thm:1}. 

For simplicity of exposition, we first consider the case where $n$ is odd. Let $\mathcal{T}$ be any general branch-and-bound tree that solves (\ref{exp_instance}).  Let $\mathcal{S} := \left\{ y\in \{0, 1\}^ n \, | \, y_j = 1 \textup{ if j is odd} \right\} $ and note that $|\mathcal{S}| = 2^{(n-1)/2} \ge 2^{ (n/2) -1}$. Since the elements of $\mathcal{S}$ are integer vectors, they must satisfy the branching constraints of some leaf node. 

 Now, consider any $u, v \in \mathcal{S}$ such that $u \neq v$ and let $\mathcal{N}$ be a node of $\mathcal{T}$ such that both $u$ and $v$ are feasible for the branching constraints at $\mathcal{N}$.  We will show that $\mathcal{N}$ is not a leaf node of $\mathcal{T}$ by constructing a solution $(\hat{x}, \hat{y})$ that satisfies the constraints describing $\mathcal{N}$ and whose objective value is strictly better than the MILP optimal objective function value. Equivalently, any leaf node of $\mathcal{T}$ contains at most one element of $\mathcal{S}$. Thus $\mathcal{T}$ must have at least $|S|$ leaf nodes, completing the proof.

It remains to show that $\mathcal{N}$ is not a leaf node when $u,v$ satisfy the branching constraint of $\mathcal{N}$, where $u, v \in \mathcal{S}$ and $u \neq v$. Let $\hat{y} := \frac{(u+v)}{2}$. By convexity, $\hat{y}$ also satisfies the branching constraints at $\mathcal{N}$. 

Construct $\hat{x}$ as follows, 
\begin{align*}
    \hat{x}_j &= 
    \begin{cases} 
    1 & \textup{ if } j = n \\
    1 & \textup{ if } j<n \textup{ is odd and } \hat{y}_{j+1} \in \{1, 0.5\} \\
    2 & \textup{ if } j<n \textup{ is odd and } \hat{y}_{j+1} = 0 \\
    1 & \textup{ if } j \textup{ is even and } \hat{y}_{j} \in \{1, 0.5\} \\
    0 & \textup{ if } j \textup{ is even and } \hat{y}_{j} = 0. \\
    \end{cases} 
\end{align*}
It is straightforward to verify that $(\hat{x}, \hat{y})$ satisfies (\ref{lotsize:flow})-(\ref{lotsize:bounds}). Since $\hat{y}$ satisfies the branching constraints at $\mathcal{N}$, this shows that $(\hat{x}, \hat{y})$ is a feasible solution for linear program corresponding to $\mathcal{N}$.

We will now compute the objective function value of $(\hat{x}, \hat{y})$ by considering consecutive pairs of time periods, $j$ and $j+1$ for odd values of $j < n$. There are three possible cases: 
\begin{enumerate}[label=(\roman*)]
    \item $\hat{y}_{j+1} = 0 \; \implies \hat{x}_j = 2, \, \hat{x}_{j+1} = 0$: 
    \begin{equation*}
         OBJ_j + OBJ_{j +1} = ( 1 + (n-j+1)\cdot 2 ) + (0) = (n-j+1) + (n-j) + 2.
    \end{equation*}
    
    \item $\hat{y}_{j+1} = 1 \; \implies \hat{x}_j = 1, \, \hat{x}_{j+1} = 1$: 
    \begin{equation*}
         OBJ_j + OBJ_{j+1} = ( 1 + (n-j+1)\cdot 1 ) + ( 1 + (n-j)\cdot 1 ) = (n-j+1) + (n-j) + 2.
    \end{equation*}
    
    \item $\hat{y}_{j+1} = 0.5 \; \implies \hat{x}_j = 1, \, \hat{x}_{j+1} = 1$: 
    \begin{equation*}
         OBJ_j + OBJ_{j+1} = ( 1 + (n-j+1)\cdot 1 ) + ( 0.5 + (n-j)\cdot 1 ) = (n-j+1) + (n-j) + 2 - 0.5.
    \end{equation*}
\end{enumerate}
Lastly, note that the contribution to the objective function from the last period is $2$, i.e. $OBJ_n = 2$. Let $N_{0.5}$ be the number of coordinates of $\hat{y}$ that are 0.5 and $OPT$ be the optimal MILP objective value from Claim~\ref{claim:mip opt val}. Then, the objective function value for  $(\hat{x}, \hat{y})$ is, 
\begin{align*}
    \sum_{j=1}^n OBJ_j 
    &= \sum_{j=1}^{n-1} OBJ_j + OBJ_n \\
    &= \sum_{\substack{j=1, \\ j \textup{ is odd}}}^{n-2} \left( (n-j+1) + (n-j) + 2 \right) - 0.5\,N_{0.5} + 2\\
    &= \sum_{j=1}^{n} \left((n-j+1) + 1\right) - 0.5\,N_{0.5} \\
    &=  \dfrac{n(n+1)}{2} + n - 0.5\,N_{0.5} \\
    & = OPT - 0.5\,N_{0.5} \\
    & < OPT,
\end{align*}
where the last inequality follows since $\hat{y}$ must have at least one component equal to $0.5$ since $u \neq v$. Thus, $\mathcal{N}$ cannot be a leaf node. This completes the proof. 

In the case where $n$ is even, the proof follows similarly, by defining 
\begin{equation*}
\mathcal{S} := \left\{ y\in \{0, 1\}^ n \, | \, y_1 = 1, \, y_j = 1 \textup{ if j is even} \right\}    
\end{equation*}
and noting that $|\mathcal{S}| = 2^{ (n/2) -1}$.

\bibliographystyle{plain}
\bibliography{ref}

\end{document}